\documentclass{amsart}

\usepackage{amssymb}
\usepackage{amsmath}
\usepackage{amsthm}
\usepackage{amsbsy}
\usepackage{graphicx}
\usepackage{bm}
\usepackage{setspace}\spacing{1.67}

\theoremstyle{plain}
\newtheorem{theorem}{Theorem}[section]
\newtheorem{lemma}[theorem]{Lemma}

\newtheorem{proposition}[theorem]{Proposition}
\newtheorem*{quotthm}{Theorem}

\theoremstyle{definition}

\theoremstyle{remark}

\newtheorem*{acknowledgements}{Acknowledgements}

\newcommand{\Z}{\mathbb{Z}}

\newcommand{\C}{\mathbb{C}}

\newcommand{\co}{\colon\thinspace}
\newcommand{\del}{\partial}

\begin{document} 

\title{Degree-one maps, surgery and four-manifolds}

\author{Siddhartha Gadgil}

\address{	Department of Mathematics\\
		Indian Institute of Science\\
		Bangalore 560003, India}

\email{gadgil@math.iisc.ernet.in}

\date{\today}

\subjclass{Primary 57N10 ; Secondary 57N13, 57M27}

\keywords{Degree-one maps, Dehn Surgery, Topological Field theories}

\begin{abstract}
We give a description of degree-one maps between closed,
oriented $3$-manifolds in terms of surgery. Namely, we show that there
is a degree-one map from a closed, oriented $3$-manifold $M$ to a
closed, oriented $3$-manifold $N$ if and only if $M$ can be obtained
from $N$ by surgery about a link in $N$ each of whose components is an
unknot.

We use this to interpret the existence of degree-one maps between
closed $3$-manifolds in terms of smooth $4$-manifolds. More precisely,
we show that there is a degree-one map from $M$ to $N$ if and only if
there is a smooth embedding of $M$ in $W=(N\times I)\#_n \overline{\C
P^2}\#_m {\C P^2}$, for some $m\geq 0$, $n\geq 0$ which separates the
boundary components of $W$. This is motivated by the relation to
topological field theories, in particular the invariants of Ozsvath
and Szabo.
\end{abstract}

\maketitle

\section{Introduction}

We assume that all manifolds are connected and that all $3$-manifolds
are smooth. For closed, oriented $3$-manifolds $M$ and $N$, we say
that $M$ \emph{dominates} $N$ (or $M$ 1-dominates $N$) if there is a
degree-one map from $M$ to $N$. This gives a transitive relation on
closed, oriented $3$-manifolds which has been extensively studied by
several authors (for instance, see~\cite{BW}, \cite{HWZ}, \cite{ReW},
\cite{Ro}, \cite{RW}, \cite{So}, \cite{WZ}). Note that every manifold
dominates $S^3$ and that if $M$ dominates $N$ then there is a
surjection from $\pi_1(M)$ to $\pi_1(N)$.

In this paper, we characterise dominance in terms of Dehn surgery. We
use this to interpret dominance in terms of smooth $4$-manifolds. The
latter is motivated by the relation to topological field theories, in
particular the invariants of Ozsvath and Szabo~\cite{OZ}\cite{OZ1}.

Suppose $N$ is a closed $3$-manifold and $M$ is obtained from $N$ by
surgery about a link in $N$ all of whose components are homotopically
trivial, then it is easy to see that there is a degree-one map from
$M$ to $N$. Our first result is the converse, namely that if there is
a degree-one map from $M$ to $N$, then $M$ can be obtained from $N$ by
surgery about a link $L\subset N$ all of whose components are
homotopically trivial. In fact we can find $L$ each of whose components
is an unknot.

\begin{theorem}\label{surger}
For closed oriented $3$-manifolds $M$ and $N$, there is a degree-one
map from $M$ to $N$ if and only if $M$ can be obtained from $N$ by
surgery about a link in $N$ each of whose components is an unknot in
$N$.
\end{theorem}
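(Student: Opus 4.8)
The plan is to prove the two implications separately, treating the backward direction as a direct construction and spending the real effort on the forward direction. For the backward implication, suppose $M$ is surgery on a link $L=L_1\cup\dots\cup L_k\subset N$ whose components are unknots, and write $X=N\sm\nu(L)$ for the common exterior, so that $N=X\cup(\bigsqcup_i V_i)$ and $M=X\cup(\bigsqcup_i V_i')$, the solid tori $V_i,V_i'$ being glued to $X$ along the same tori but by different slopes. I would take $f$ to be the identity on $X$. Since each $L_i$ is an unknot it is null-homotopic, so the curve on $\del V_i'\subset N$ bounding a meridian disk of $V_i'$ is null-homotopic in $N$; hence the identity on $\del V_i'$ extends to a map $V_i'\to N$, and these extensions assemble to a map $f\co M\to N$. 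Picking a regular value in the interior of $X$ away from the images of the $V_i'$ then shows $\deg f=1$.

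For the forward implication, let $f\co M\to N$ be degree one. The first step is a normal form. Since $\deg f=1$ and $f_*\co\pi_1(M)\to\pi_1(N)$ is onto, I can cancel the preimages of a regular value in pairs---joining oppositely oriented preimage points by arcs whose images are null-homotopic in $N$ and cancelling $f$ along them---until $f$ is an orientation-preserving diffeomorphism over a ball $B\subset N$. Removing $B$ and its preimage reduces matters to a degree-one map $M_0\to N_0$ of once-punctured manifolds that is a homeomorphism on the boundary $S^2$. Fixing a Heegaard surface $\Sigma\subset N$ and homotoping $f$ to be transverse to it, the preimage $\hat\Sigma=f^{-1}(\Sigma)$ is a closed surface and $f|\co\hat\Sigma\to\Sigma$ has degree one.

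The crux of the argument---and the step I expect to be the main obstacle---is to simplify this picture by compressions of $\hat\Sigma$ and homotopies of $f$ until $f$ is a diffeomorphism away from a regular neighborhood of a link $L\subset N$, each component of which lies on $\Sigma$ and bounds a compressing disk in one of the two handlebodies; such components are null-homotopic in $N$, and $M$ is recovered from $N$ by surgery on $L$. Here the degree-one hypothesis is exactly what guarantees that the obstructions to these compressions vanish: since a degree-one map of surfaces is, up to homotopy, a pinch, the discrepancy between $M$ and $N$ can be concentrated on null-homotopic curves. Controlling the homotopy of $f$ carefully enough to make this precise, rather than merely producing \emph{some} surgery link, is where the work lies.

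It remains to upgrade the null-homotopic link $L$ to a link of unknots without changing the surgered manifold. By Milnor's characterisation, a null-homotopic knot can be converted to an unknot by a finite sequence of crossing changes, and each crossing change can be realised by adjoining to $L$ a small $\pm1$-framed unknot encircling the two relevant strands. Since $\pm1$-surgery on such an unknot returns $N$ while effecting precisely that crossing change, surgery on the enlarged link still yields $M$, and every component is now an unknot. This mechanism is also the geometric source of the $\overline{\C P^2}$ and $\C P^2$ summands appearing in the companion four-manifold statement, which is why I would organise the proof so that the two results share this final step.
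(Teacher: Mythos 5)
The forward implication is where the theorem lives, and your proposal stops exactly at that point: the paragraph beginning ``the crux of the argument'' states the desired normal form (homotope $f$ until it is a diffeomorphism away from a neighbourhood of a null-homotopic link) and then says that making this precise ``is where the work lies'' --- so the key step is announced rather than proved. Moreover, the normal form you aim for is stronger than what is available and is not the one the paper uses. The paper's route is: (i) invoke the Haken--Waldhausen theorem to homotope $f$ to a $1$-pinch $g$, i.e.\ a map restricting to a homeomorphism over one handlebody $H_1$ of a Heegaard splitting $N=H_1\cup H_2$; (ii) consider the preimages $F_i=g^{-1}(D_i)$ of a complete system of meridian discs of $H_2$, made connected by a Stallings binding-tie argument; (iii) \emph{surger $M$} (rather than homotope $f$) along a maximal non-separating curve system on each $F_i$, producing an auxiliary manifold $M'$ and a map $g'$ for which every $g'^{-1}(D_i)$ is a disc; (iv) in that special case, split $g'^{-1}(H_2)$ along these discs to get a manifold with $S^2$ boundary and apply Lickorish--Wallace to exhibit it as surgery on an unlink in the ball --- note that even in this special case $g'$ is nowhere near a diffeomorphism off a link neighbourhood, which is why Lickorish--Wallace is needed; and (v) observe that the surgeries cancelling step (iii) are along curves meeting each $D_i$ algebraically zero times, hence representing the trivial word in the free group $\pi_1(H_2)$ and so null-homotopic in $N$. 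None of (i)--(v) appears in the proposal, and without some substitute for the Haken--Waldhausen input and the Lickorish--Wallace step, the compression scheme you sketch has no engine; as stated, the claim that $f$ can be homotoped to a local diffeomorphism away from a link is essentially as strong as the theorem itself.

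Two smaller points. In the backward direction, extending the identity on the link exterior over each surgery solid torus $V_i'$ requires more than the null-homotopy of the image of the meridian of $V_i'$: after extending over a meridian disc one must still extend over the remaining $3$-cell, and the obstruction lies in $\pi_2(N)$, which need not vanish (e.g.\ for reducible $N$); the clean fix is to extend over the $4$-dimensional trace of the surgery, or simply to cite Boileau--Wang as the paper does. Your final step --- converting a null-homotopic link to a link of unknots by crossing changes realised by sliding over $\pm1$-framed unknots --- is exactly the paper's concluding lemma and is correct, as is your remark that this is the source of the $\C P^2$ and $\overline{\C P^2}$ summands in Theorem~\ref{embed}.
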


We next interpret dominance of $3$-manifolds in terms of
$4$-manifolds. Observe that a partial ordering on closed orientable
$3$-manifolds can be defined by saying that $M$ \emph{strongly
dominates} $N$ if there is a smooth embedding $i\co M\to N\times
(0,1)\subset N\times [0,1]$ so that $i(M)$ separates the two boundary
components $N\times \{0\}$ and $N\times \{1\}$, with the
\emph{appropriate orientation}. Observe that if $M$ strongly dominates
$N$, the composition $\pi\circ i$ of the embedding $i$ with the
projection $\pi\co N\times [0,1]\to N$ has degree $\pm 1$. We say that
the embedding has the \emph{appropriate orientation} if the degree of
this map is one. Such a definition is related to the theory of
\emph{imitations} introduced by Kawauchi~\cite{Ka}.

This definition is motivated by the relation to ($(3+1)$-dimensional)
\emph{topological field theories}, in particular the invariants of
Ozsvath and Szabo (however, our methods do not apply to the
Ozsvath-Szabo theory because of the dependence on $Spin^c$
structures). Recall that a degree-one map $f\co M\to N$ induces a
surjection $f_*$ on the level of fundamental groups. Hence if $\pi_1(N)$ is
non-trivial so is $\pi_1(M)$. Further $f^*\co H^*(N)\to H^*(M)$ is an
injection, which shows that if $H^k(N)\neq 0$ then $H^k(M)\neq 0$.

We see that an analogous result holds for any topological field
theory, with dominance replaced by strong dominance. Recall that a
$(3+1)$-dimensional topological field theory associates to each
closed, oriented $3$-manifold $M$ a vector space $V(M)$ and to each
cobordism $W$ from $M$ to another closed, oriented $3$-manifold $N$ a
linear transformation $T(W)\co V(M)\to V(N)$. Further this satisfies
functorial properties, namely a product cobordism induces the identity
map and if $W_1$ is a cobordism from $M_1$ to $M_2$ and $W_2$ is a
cobordism from $M_2$ to $M_3$ then for the cobordism $W_1\coprod_{M_2}
W_2$ from $M_1$ to $M_3$, $T(W_1\coprod_{M_2} W_2)=T(W_2)\circ
T(W_1)$.

Suppose $M$ strongly dominates $N$, then splitting $N\times [0,1]$
along the given embedding of $M$ gives two cobordisms, $W_1$ from $N$
to $M$ and $W_2$ from $M$ to $N$. The composition of these is the
product cobordism $N\times [0,1]$, which induces the identity map on
$T(N)$. It follows that the identity map on $V(N)$ factors through
$V(M)$, and in particular $V(N)\neq 0$ implies that $V(M)\neq 0$. This
is the analogue of the corresponding results for $\pi_1$ and $H^*$
with respect to degree-one maps. Thus \emph{strong dominance} plays
the same role in the \emph{bordism category} as dominance in the
homotopy category.

We shall see (in Proposition~\ref{Poinc}) that the relation of strong
dominance is stronger than dominance.  We show, however, that
dominance is equivalent to a relation obtained using $4$-manifolds
similar to the above one except that we allow `positive and negative
blow-ups'.

\begin{theorem}\label{embed}
For closed orientable $3$-manifolds $M$ and $N$, there is a degree-one
map from $M$ to $N$ if and only if there is a smooth embedding of $M$
in $int(W)$, $W=(N\times I)\#_n \overline{\C P^2}\#_m {\C P^2}$ for
some $m>0$, $n>0$ which separates the boundary components of $W$, with
the embedding having the appropriate orientation.
\end{theorem}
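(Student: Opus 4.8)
The plan is to leverage Theorem~\ref{surger}, which already characterizes degree-one maps in terms of surgery on unknots, and to translate each unknot surgery into a connected sum with $\overline{\C P^2}$ or $\C P^2$ at the four-manifold level. First I would establish the forward direction: assuming a smooth embedding of $M$ in $int(W)$ separating the boundary components with the appropriate orientation, I would note that the projection $\pi\co W\to N$ (well-defined up to the blow-ups) restricted to $M$ yields a degree-one map, since collapsing the $\overline{\C P^2}$ and $\C P^2$ summands to points gives a degree-one map $W\to N\times I$, and composing the embedding with this collapse and the projection produces the desired degree-one map $M\to N$. The orientation hypothesis is exactly what guarantees the degree is $+1$ rather than $-1$.

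For the reverse direction, the key idea is that surgery on an unknot corresponds geometrically to a blow-up. Starting from a degree-one map $f\co M\to N$, Theorem~\ref{surger} gives a link $L\subset N$, each component an unknot, such that $M$ is obtained from $N$ by surgery on $L$. I would then recall that performing $\pm 1$-surgery on an unknot in a $3$-manifold $N$ is realized, at the four-manifold level, by attaching a two-handle along that unknot to $N\times I$, and that since the unknot bounds a disk, the resulting four-manifold cobordism is diffeomorphic to $(N\times I)\#\overline{\C P^2}$ or $(N\times I)\#\C P^2$ (the sign of the surgery coefficient dictating which sphere appears). The core of the argument is to handle arbitrary surgery coefficients: an unknot with general framing can be converted into a sequence of $\pm 1$-framed unknots via the standard slam-dunk and Rolfsen-twist moves on the Kirby diagram, so that each component of $L$ contributes a collection of blow-ups, yielding $W=(N\times I)\#_n\overline{\C P^2}\#_m\C P^2$ with $M$ embedded inside as the boundary of the handle attachment region.

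I would then check that this embedded copy of $M$ separates the two boundary components $N\times\{0\}$ and $N\times\{1\}$, which follows because the handle attachments are performed in a collar neighborhood of $N\times\{1\}$, leaving $M$ as a separating hypersurface, and that the orientation is the appropriate one, by tracking that the resulting projection-and-collapse map recovers the original degree-one $f$ up to homotopy. A minor technical point is ensuring $m>0$ and $n>0$ strictly rather than merely $m,n\geq 0$; this can be arranged by introducing a pair of canceling $+1$ and $-1$ surgeries on a trivial unknot disjoint from $L$, which adds one $\C P^2$ and one $\overline{\C P^2}$ summand without changing the diffeomorphism type of $M$ or the embedding's separating property.

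The main obstacle I anticipate is the faithful reduction of arbitrary surgery coefficients to $\pm 1$ blow-ups while simultaneously controlling the ambient four-manifold's diffeomorphism type as precisely a connected sum of $\overline{\C P^2}$'s and $\C P^2$'s. The Kirby-calculus moves are standard for changing framings, but one must verify that each move corresponds exactly to a blow-up or blow-down and that no extra handles or exotic summands are introduced; in particular, after unknotting and trivializing the framing via Rolfsen twists, one must confirm that the components of $L$ remain unknotted and unlinked in a way compatible with repeated blow-ups. Keeping careful track of orientations throughout this reduction, so that the final embedding genuinely has the appropriate orientation inducing degree $+1$, is where the bookkeeping is most delicate.
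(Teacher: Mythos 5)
Your forward direction matches the paper's: collapse the $\overline{\C P^2}$ and $\C P^2$ summands to get a map $W\to N\times I$, project to $N$, and restrict to $M$; the paper makes the degree computation precise by noting that $[M]$ is homologous to $[N\times\{0\}]$ in $W$ because $M$ separates the boundary components. That part is fine.

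The reverse direction, however, has a genuine gap at its core. Theorem~\ref{surger} gives a link $L$ each of whose \emph{components} is an unknot, but $L$ itself is in general nontrivially \emph{linked} --- indeed it must be, since surgery on a $\pm 1$-framed unlink returns $N$. Your argument treats the components independently: you attach a $2$-handle along each unknotted component and assert that each contributes a $\#\overline{\C P^2}$ or $\#\C P^2$ summand. This is true for a single unknot (it lies in a ball), but false for a linked family: the intersection form of the cobordism obtained by attaching $2$-handles along $L$ is the linking matrix of $L$, which is not diagonal in general, so the resulting $4$-manifold need not be $(N\times I)\#_n\overline{\C P^2}\#_m\C P^2$. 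The slam-dunk and Rolfsen-twist moves you invoke only address the framing of an individual component; they do not remove the linking between components, and blowing down one $\pm 1$-framed unknotted component twists --- and can knot --- the remaining components. Proving that $L$ can be completely blown down is essentially equivalent to the statement you are trying to prove, so as written the argument is circular at this step. You flag this as the ``main obstacle'' but do not resolve it.

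The paper resolves it by running the construction in the opposite direction. It starts from an honest unlink: $L$ is obtained from an unlink $L_0$ by $p$ crossing changes, and one places auxiliary $\pm 1$-framed unknots $K_1,\dots,K_n$ (an unlink together with $L_0$) with $K_i$ in the ball $B_i$ where the $i$-th crossing change occurs. Attaching $\pm 1$-framed $2$-handles along this unlink manifestly produces $W=(N\times I)\#_k\overline{\C P^2}\#_l\C P^2$. Then handle slides of the $L_0$-components over the $K_i$ realize the crossing changes (the same blow-up trick used at the end of the proof of Theorem~\ref{surger}), and slides over the remaining $K_j$ adjust framings, yielding a second Kirby diagram for the \emph{same} $W$ containing $L$ as a framed sublink. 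Reordering the handles so that the $L$-handles are attached first exhibits $M$ as a level set, hence as a separating embedded submanifold. If you want to salvage your write-up, you should replace the ``attach handles along $L$ and identify the result'' step with this unlink-plus-handle-slide construction.
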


There is a relation in between dominance and strong dominance which is
of interest. Namely, we say that $M$ \emph{negatively dominates} $N$
if there is an embedding of $M$ into $W=(N\times I)\#_n \overline{\C
P^2}$, for some $n\geq 0$, which separates the two boundary
components. This is of interest because the Ozsvath-Szabo invariants
(as also the Seiberg-Witten invariants) behave well under blowing up.

We shall see (in Proposition~\ref{Poinc}) that the Poincar\'e homology
sphere does not even negatively dominate $S^3$. We shall study
negative dominance elsewhere.

\begin{acknowledgements}
I thank the referee for many helpful comments.
\end{acknowledgements}

\section{Degree-one maps and Surgery}

In this section, we give a proof of Theorem~\ref{surger}. Suppose $M$
is obtained from $N$ by surgery about a link $L\subset N$ each of
whose components $K_i$ is homotopically trivial. Then it is shown
in~\cite{BW}) that there is a degree-one map from $M$ to $N$.

The converse is based on the following theorem of Haken~\cite{Ha} and
Waldhausen~\cite{Wa} (see also~\cite{RW}).

\begin{quotthm}[Haken-Waldhausen]
Let $f\co M\to N$ be a degree-one map and let $N=H_1\cup H_2$ be a
Heegaard decomposition of $N$ with $H_1$ and $H_2$ handlebodies. Then
$f$ is homotopic to a map $g$ such that
$g|_{g^{-1}(H_1)}\co g^{-1}(H_1)\to H_1$ is a homeomorphism.
\end{quotthm}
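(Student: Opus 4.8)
The plan is to simplify $f$ one handle at a time over a handle decomposition of $H_1$, using the degree-one hypothesis to cancel superfluous sheets of preimages. Realise $H_1$ as a single $0$-handle $B$ (a ball) together with $1$-handles $h_1,\dots,h_k$, so that the co-cores of the $h_j$ constitute a complete system of meridian disks $D_1,\dots,D_k$ cutting $H_1$ back into $B$. First I would homotope $f$ to be smooth and transverse to $\Sigma=\partial H_1$ and to each $D_j$; write $M_1=f^{-1}(H_1)$, $S=f^{-1}(\Sigma)=\partial M_1$, and $F_j=f^{-1}(D_j)$, a properly embedded surface in $M_1$. Since the preimage of any interior point of $D_j$ already lies in $S$, counting such preimages with sign shows that $f$ restricted to $F_j$ carries algebraic degree one over $D_j$, and likewise $f|_S\co S\to\Sigma$ has degree one.

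I would next control the $0$-handle. Choose a regular value $p$ in the interior of $B$; because $\deg f=1$, I would homotope $f$ until $f^{-1}(p)$ is a single point. A pair of preimage points of opposite sign can be cancelled by joining them by an arc $\alpha$ in the connected manifold $M$ and dragging one point to the other along $\alpha$; the obstruction is the class of the image loop $f(\alpha)$ in $\pi_1(N,p)$, and this can be made trivial by adjusting $\alpha$ by loops of $M$, which is legitimate precisely because $f_*$ is onto $\pi_1(N)$. A local straightening then makes $f$ a homeomorphism from a ball in $M_1$ onto $B$.

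Having pinned down the $0$-handle, I would push this control out along the $1$-handles. Over each meridian disk $D_j$ the surface $F_j$ meets a transverse core arc algebraically once, so I would first delete closed components and then remove inessential intersection curves by innermost-disk and outermost-arc homotopies, using that the handlebody $H_1$ is irreducible and that any curve of $S$ whose image bounds in $H_1$ may be compressed. The aim is to reduce each $F_j$ to a single disk mapping homeomorphically to $D_j$; cutting $M_1$ along $\bigcup_j F_j$ then leaves a single ball mapping homeomorphically to $B$ by the previous step, and reassembling yields that the resulting map $g$ restricts to a homeomorphism $g^{-1}(H_1)\to H_1$. Nothing need be said about the behaviour over $H_2$.

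The hard part will be exactly this last cancellation of the extra sheets of $F_j$: the degree computation only certifies that the superfluous components vanish \emph{algebraically}, and upgrading this to an honest homotopy of $f$ that removes them \emph{geometrically} is the technical core. It demands simultaneous bookkeeping of how the sheets are interconnected as one passes across the several $1$-handles, together with repeated use of the irreducibility of the handlebody and of the $\pi_1$-surjectivity of $f$ to fill the relevant loops and compressing disks; this is the innermost/outermost induction at the heart of the arguments of Haken and Waldhausen.
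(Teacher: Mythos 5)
There is a genuine gap, and you have in fact named it yourself. The paper does not prove this statement at all: it is quoted as a known theorem of Haken and Waldhausen, with the proof deferred to \cite{Ha}, \cite{Wa} (see also \cite{RW}), so your proposal has to stand on its own --- and it does not. Your first two steps (transversality to $\Sigma$ and to the meridian disks $D_j$, and making $f^{-1}(p)$ a single point by cancelling opposite-sign preimages along arcs whose image loops are killed using $\pi_1$-surjectivity) are standard and correct. But the entire content of the theorem is the step you defer in your final paragraph: reducing each preimage surface $F_j$ to a single disk \emph{by a homotopy of $f$}. The tools you invoke there do not suffice. An innermost-disk compression of $F_j$, realised as a homotopy of $f$, requires an \emph{embedded} compressing disk in $M$ bounded by the relevant curve of $F_j$; the degree-one hypothesis only tells you that the image of such a curve is null-homotopic in $D_j$, which produces a singular disk in $N$, not an embedded disk in $M$. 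No irreducibility or compressibility hypothesis is available on the arbitrary closed manifold $M$ (the irreducibility of the handlebody $H_1$, which you cite, lives on the wrong side of the map), and the passage from algebraic triviality of the excess sheets to their geometric removal is precisely what Haken's and Waldhausen's arguments accomplish by quite different and substantially more elaborate means. Writing ``this is the innermost/outermost induction at the heart of the arguments of Haken and Waldhausen'' concedes rather than closes the gap.

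It is instructive that the paper itself runs into exactly this obstruction in its own argument and sidesteps it: in Lemma~\ref{spcl}, when the preimages $F_i$ of the disks $D_i$ are not disks, the author does \emph{not} homotope $g$ to make them disks --- instead he performs \emph{surgery on $M$} along a maximal non-separating curve system on each $F_i$, changing the manifold to an $M'$ for which the preimages become disks, and then accounts for the surgeries separately. That this detour is taken, even with the full Haken--Waldhausen theorem already in hand, is good evidence that preimage surfaces of disks cannot in general be simplified to disks by homotopy via elementary compression moves alone. (Two smaller slips: the preimage of an \emph{interior} point of $D_j$ lies on $F_j$, not on $S=f^{-1}(\Sigma)$; and after arranging $f^{-1}(p)$ to be a single point you only get a homeomorphism over a small ball about $p$, so you must also shrink the $0$-handle into that ball before proceeding --- minor, but worth stating.)
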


Such a map is called a $1$-pinch. Thus if $M$ dominates $N$, there is
a $1$-pinch $g\co M\to N$.

\begin{proof}[Proof of Theorem~\ref{surger}]
Assume $M$ dominates $N$ and let $g$, $H_1$ and $H_2$ be as above.
Consider a collection of properly embedded discs $D_i$, $1\leq i\leq
n$, in $H_2$ such that on splitting $H_2$ along $D_i$, we get a
$3$-ball $B$. We can assume that $g$ is transversal to $D_i$ for all
$i$, $1\leq i\leq n$. Let $F_i=g^{-1}(D_i)$ and let
$P=g^{-1}(H_2)$. Note that $g|_{\del H_2}$ is a homeomorphism and
hence $F_i$ consists of a compact surface with a single boundary
component and a (possibly empty) collection of closed surfaces. First,
note that by performing a homotopy of $g$ we can assume that each
$F_i$ is connected. This follows (as the induced map on $\pi_1$ is a
surjection) by using standard techniques using binding ties as in
Stallings' proof of the Knesser conjecture (see for example the proof
of Knesser's conjecture in~\cite{He}). Hence $F_i$, $1\leq i\leq n$,
is a compact surface with a single boundary component.

We first consider the special case when each $F_i$ is a disc. 

\begin{lemma}
Suppose $F_i=g^{-1}(D_i)$ is a disc for each $i$, $1\leq i\leq
n$. Then $P$ is obtained from the handlebody $H_2$ by surgery about a
link, each of whose components is an unknot.
\end{lemma}
\begin{proof}
After a homotopy of $g$, we can assume that $F_i$ maps
homeomorphically onto $D_i$ for $1\leq i\leq n$. On splitting $P$
along the properly embedded discs $F_i$, $1\leq i\leq n$, we get a
manifold $\hat P$ with boundary a $2$-sphere. By the theorem of
Lickorish and Wallace~\cite{Li}\cite{Wal}, this can be obtained from
$B$ by surgery about a link $L$ in $B$, with each component of $L$ an
unknot. Thus $P$ is obtained from $H_2$ by surgery about a link, each
component of which is an unknot.
\end{proof}

It follows that, in this special case, $M$ is obtained from $N$ by
surgery about a link each component of which is an unknot. We now turn
to the general case. 

In the general case, we shall perform surgery on $M$ to obtain a
manifold $M'$ and a degree-one map $g':M'\to N$ which is as in the
special case. Hence $M'$ is obtained from $N$ by surgery about a link,
each component of which is an unknot. Further $M$ is obtained from
$M'$ by surgery, so we get a link in $N$ so that surgery about this
link gives $M$. We shall show that each component of this link is
homotopically trivial in $N$. From this, we deduce that we can obtain
$M$ from $N$ by surgery about a link, each component of which is an
unknot.

First, we construct $M'$ and $g'$.

\begin{lemma}\label{spcl}
There is a framed link $L'\subset P\subset M$ such that, if $M'$ is
the result of surgery of $M$ about $L'$, there is a degree-one map
$g':M'\to N$, which co-incides with $g$ outside a neighbourhood of
$L'$, so that $g'^{-1}(D_i)$ is a disc for each $i$. Furthermore, if
$P'\subset M'$ is the result of surgery of $P$ about $L'$, then
$g'(P')\subset H_2$.
\end{lemma}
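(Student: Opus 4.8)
The plan is to kill the genus of each preimage surface $F_i$ by surgering $M$ along curves lying on the $F_i$, with framings chosen so that the surgery both caps the surface off into a disc and leaves a degree-one map behind. Write $g_i$ for the genus of $F_i$ (a compact surface with one boundary circle). On each $F_i$ I would fix $g_i$ disjoint simple closed curves $\gamma^i_1,\dots,\gamma^i_{g_i}$, disjoint from $\del F_i$, such that cutting $F_i$ along them produces a planar surface (for instance the curves $a_1,\dots,a_{g_i}$ of a standard symplectic basis). Let $L'$ be the union of these curves over all $i$, and frame each component by the framing it inherits from the surface $F_i$ carrying it. Since every $\gamma^i_j$ lies in the interior of $F_i\subset \mathrm{int}(P)$, a tubular neighbourhood $N(L')$ may be taken inside $\mathrm{int}(P)$ and disjoint from $g^{-1}(H_1)$.

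The essential observation is that $\gamma^i_j\subset F_i=g^{-1}(D_i)$ is mapped by $g$ into the disc $D_i$, so $g(\gamma^i_j)$ --- and likewise its surface-framing longitude, a pushoff of $\gamma^i_j$ --- is null-homotopic in $N$. For the surface framing the meridian of the filling solid torus $V_j$ is precisely this longitude; hence $g|_{\del V_j}$ sends the meridian to a null-homotopic loop and therefore extends over $V_j$. I would choose the extension to carry $V_j$ into $H_2$ (in fact into a neighbourhood of $g(\gamma^i_j)$ in the contractible disc $D_i$), transverse to $D_i$, with $(g')^{-1}(D_i)\cap V_j$ equal to the two meridian discs of $V_j$. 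Setting $g'=g$ off $N(L')$ and equal to this extension on each $V_j$ defines $g'\co M'\to N$ on the surgered manifold $M'$.

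It remains to check the three conclusions. The surface framing is exactly the slope for which the two boundary circles of $F_i\setminus N(L')$ bound the meridian discs of the $V_j$; consequently $(g')^{-1}(D_i)$ is $F_i$ cut along the $\gamma^i_j$ and capped with these discs, i.e. the planar surface with its $2g_i$ new boundary circles filled, which is a disc. Because $g'=g$ on $g^{-1}(H_1)$ and the latter is carried homeomorphically onto $H_1$, a regular value $y\in \mathrm{int}(H_1)$ satisfies $(g')^{-1}(y)=g^{-1}(y)$, a single point of the correct sign --- the whole modification lives over $H_2$ --- so $g'$ again has degree one. Finally, with $P'$ the surgery of $P$ along $L'$, we have $g'(P')\subset H_2$ since $g'=g$ sends $P\setminus N(L')$ into $g(P)\subset H_2$ while each $V_j$ was mapped into $H_2$ by construction.

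The step I expect to be the main obstacle is the extension in the second paragraph, where a single framing must do double duty: the surface framing has to be simultaneously the slope that lets $g$ extend over each $V_j$ and the slope that geometrically caps $F_i$ into a disc, all while keeping the modification inside $H_2$ so that the degree computation is transparent. Pinning down that the surface-framing longitude is indeed the meridian of the capping solid torus, and constructing the local model of $g'$ near each $\gamma^i_j$ that realises the two meridian discs as $(g')^{-1}(D_i)$, is the technical heart of the argument; the rest is bookkeeping with Euler characteristics together with the fact that contractibility of $D_i$ removes any homotopy-theoretic obstruction to extending $g$.
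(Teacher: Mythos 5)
Your proposal follows essentially the same route as the paper's proof: the same link $L'$ of surface-framed curves cutting each $F_i$ down to a planar surface, the same extension of $g$ over the surgery solid tori using that the surface-framed longitude (the new meridian) maps null-homotopically into $D_i$, and the same identification of $(g')^{-1}(D_i)$ with the compressed surface, hence a disc. One small wording slip: the extension cannot carry all of $V_j$ into $D_i$ (that would force $V_j\subset (g')^{-1}(D_i)$ and is incompatible with transversality); as in the paper, only the two meridian discs should map into $D_i$, with the two complementary balls of $V_j$ mapped into the ball $B$ obtained by cutting $H_2$ along the $D_k$.
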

\begin{proof}
For each $i$, $1\leq i\leq n$, consider a collection $L_i$ of
disjoint, embedded simple closed curves on $F_i$ which do not separate
$F_i$ and are maximal with respect to this property. Then $L'=\cup_i
L_i$ is a link in $M$. We consider a corresponding framed link (also
denoted $L'$), with the framing of a component of $L_i$ given by the
normal to $F_i$. Let $M'$ be the manifold obtained from $M$ by surgery
about the framed link $L'$.

We shall see that the map $g$ induces a degree-one map $g'$ from $M'$
to $N$ with $g'^{-1}(D_i)$ obtained from $F_i$ by compressing along
the components of $L_i$. By the choice of $L_i$ it follows that
$g'^{-1}(D_i)$ is a disc for all $i$, $1\leq i\leq n$.

Let the components of $L_i$ be $C^i_j$ and let $T^i_j$ denote a
regular neighbourhood of $C^i_j$. On the complement of $\bigcup_{i,j}
int(T^i_j)$, we let $g'=g$. After surgery, each $T^i_j$ is replaced by
a solid torus $X=X^i_j$ with the same boundary as $T^i_j$. Now,
$F_i\cap \del X=F_i\cap \del T^i_j$ is the union of two parallel
curves $\mu_1$ and $\mu_2$ (on $\del X$). Furthermore, by the choice
of the surgery slope, $\mu_1$ and $\mu_2$ are meridians in $X$ ,i.e.,
they bound properly embedded discs $E_1$ and $E_2$ in $X$. As
$g(\mu_i)\subset D_i$ and $\del E_i=\mu_i$, the map $g'$ extends to
$E_i$ with $g'(E_i)\subset D_i$. Using transversality of $g$ to $D_i$,
we see that we can extend $g'$ to a regular neighbourhood $E_i\times
[-1,1]$ of $E_i$ with $g'(E_i\times [-1,1]-E_i)\cap
D_i=\phi$. Finally, $X-(E_1\times (-1,1))-(E_2\times (-1,1))$ is the
union of two balls $B_1$ and $B_2$, each of whose boundaries consists
of two discs (one a component of $E_1\times \{-1,1\}$ and one a
component of $E_2\times \{-1,1\}$) and an annulus in $X$ disjoint from
$\mu_1$ and $\mu_2$.  The function $g'$ has been defined on $\del B_i$
for $i=1,2$ and, by construction and using the fact that $T^i_j$ is
disjoint from $F_l$ for $l\neq i$, the image of $g'(\del B_k)$ is
contained in $B$ for $k=1,2$. Thus $g'$ extends to a map on $X$ with
$g'(B_k)\subset B$, $k=1,2$. It follows that
$(g'|_X)^{-1}(D_i)=E_1\cup E_2$. Making this construction for each
$X^i_j$, we get a map $g'$ as claimed.
\end{proof}

Now, by applying Lemma~\ref{spcl} to $g'$, we see that $M'$ can be
obtained from $N$ by surgery about a link $L_0\subset H_2\subset N$,
each of whose components is homotopically trivial in $H_2\subset
N$. Surgery of $H_2$ about $L_0$ gives the manifold (with boundary)
$P'$. We now perform surgeries about knots $\gamma^i_j\subset P'\subset
M'$ so that the surgery about $\gamma^i_j$ cancels the surgery about
$C^i_j\subset L'$. Thus on performing such surgeries we obtain $M$. As
$P'$ is obtained from $H_2$ by surgery and the knots
$\gamma^i_j\subset K'$ can be perturbed to be disjoint from the locus
of the surgery, they can be regarded as knots in $H_2\subset N$. Thus
the union of the knots $\gamma^i_j$, with framing corresponding to the
canceling surgeries, is a framed link $L_1\subset H_2\subset N$.

Thus $M$ is obtained from $N$ be surgery about the framed link
$L=L_0\cup L_1$, with each component of $L_0$ homotopically
trivial. We next show that the knots $\gamma^i_j$, regarded as curves
in $H_2\subset N$, are homotopically trivial.

\begin{lemma}
The knots $\gamma^i_j$, regarded as curves in $H_2\subset N$, are
homotopically trivial.
\end{lemma}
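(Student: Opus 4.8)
The plan is to compute the homotopy class of each $\gamma^i_j$ directly inside $H_2$, exploiting that $\gamma^i_j$ is the core of the surgery solid torus $X^i_j$ and that it meets only the single cutting disc $D_i$. First I would recall that, as a curve in $M'$, $\gamma^i_j$ is (isotopic to) the core of $X^i_j$, the dual knot of the surgery along $C^i_j$. Under the identification of $P'$ with $H_2$ surgered about $L_0$ — in which $F_k'=g'^{-1}(D_k)$ corresponds to the cutting disc $D_k$, since $g'$ restricts to a homeomorphism there — the curve $\gamma^i_j$ lies off the surgery region and is therefore genuinely a curve in $H_2\subset N$, with its geometric intersections with the $D_k$ matching those with the $F_k'$.

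Next I would establish two intersection facts. Using that $T^i_j$ is disjoint from $F_l$ for $l\neq i$, together with the fact (from the construction in Lemma~\ref{spcl}, where $g'(X^i_j)\subset B\cup D_i$) that $g'(X^i_j)$ meets no $D_l$ with $l\neq i$, I would conclude $\gamma^i_j\cap F_l'=\emptyset$ for $l\neq i$. Hence in $H_2$ the curve $\gamma^i_j$ is disjoint from the discs $D_l$, $l\neq i$; cutting $H_2$ along these $n-1$ discs leaves a solid torus $V_i$ in which the remaining disc $D_i$ is a meridian disc, and $\gamma^i_j$ lies in $V_i$. The second fact is that $\gamma^i_j$ meets $F_i'$ (equivalently $D_i$) with algebraic intersection number zero: the core of $X^i_j$ crosses the two compressing discs $E_1,E_2\subset F_i'$ once each, and because $E_1$ and $E_2$ cap the two boundary circles of the annulus $F_i\cap T^i_j$ that was compressed away, they inherit opposite co-orientations in $F_i'$, so the two intersection points cancel.

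With these in hand the conclusion is immediate: $\gamma^i_j$ represents an element of $\pi_1(V_i)\cong\Z$, and this class is detected by the algebraic intersection number with the meridian disc $D_i$, which is zero; hence $\gamma^i_j$ is null-homotopic in $V_i$, and therefore in $H_2$ and in $N$. I expect the one genuinely delicate point to be the sign computation giving $\gamma^i_j\cdot D_i=0$: one must track the co-orientations of $E_1$ and $E_2$ through the compression of $F_i$. The cleanest way to pin this down is to observe that the meridian $m$ of $C^i_j$ bounds a meridian disc of $T^i_j$ meeting $F_i$ in a single arc, which forces the two points of $m\cap F_i$ to have opposite sign, and that surgery preserves this algebraic count. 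Everything else is routine bookkeeping about the position of $X^i_j$ relative to the cutting discs.
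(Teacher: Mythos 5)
Your proposal is correct and follows essentially the same route as the paper: both arguments come down to showing that $\gamma^i_j$ (the pushed-off meridian of $T^i_j$, equivalently the core of $X^i_j$) is disjoint from $D_l$ for $l\neq i$ and meets $D_i$ algebraically zero times, the two points of intersection having opposite signs. The paper concludes by reading off the resulting word in the free group $\pi_1(H_2)$ with generators dual to the discs $D_i$, while you repackage the same final step by cutting along the other $n-1$ discs to land in a solid torus with meridian disc $D_i$; these are interchangeable.
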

\begin{proof}
Recall that $\gamma^i_j$ is the knot corresponding to the surgery
canceling the surgery about $C^i_j$. Hence it is obtained by pushing
off a meridian of $T^i_j$. Thus, $\gamma^i_j$ intersects $F_i$
transversally in two points, with opposite signs of intersection, and
$\gamma^i_j$ is disjoint from $F_k$ for $k\neq i$.

Note that $\pi_1(H_2)$ is a free group with generators $\alpha_i$,
$1\leq i\leq n$, corresponding to the discs $D_i$. Further, if
$\gamma$ is a curve transversal to the discs $D_i$, $1\leq i\leq n$,
then (up to conjugacy) the word represented by $\gamma$ is determined
by the intersection points with the discs $D_i$. Namely, if the points
of $\gamma\cap(\cup_i D_i)$, in cyclic order around $\gamma$, are
contained in $D_{i_1}$,\dots $D_{i_k}$ with signs of intersection
$\epsilon_j=\pm 1$, then
$\gamma=\alpha_{i_1}^{\epsilon_1}\dots\alpha_{i_1}^{\epsilon_k}$ up to
conjugacy.

As $\gamma^i_j$ intersects $F_i$ (hence $D_i$) transversally in two
points, with opposite signs of intersection, and $\gamma^i_j$ is
disjoint from $F_k$ (hence $D_k$) for $k\neq i$, it follows that
$\gamma^i_j$ represents the trivial word in $\pi_1(H_2)$, and hence is
homotopically trivial in $N$.
\end{proof}

Thus, $M$ is obtained from $N$ by surgery about a link, each component
of which is homotopically trivial. We shall deduce from this that we
can choose the link so that each component is an unknot.

\begin{lemma}
Suppose $M$ is obtained from $N$ by surgery about a link $L$, each
component of which is homotopically trivial. Then $M$ is obtained from
$N$ by surgery about a link $L'$, each component of which is an
unknot.
\end{lemma}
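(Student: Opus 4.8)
The plan is to reduce everything to the standard fact that a single crossing change on a knot is realized by $(\pm 1)$--surgery on a small unknot, a move that leaves the ambient manifold $N$ unchanged. It suffices to unknot the components of $L$ one at a time, so let $K$ be one component; it is null-homotopic and carries a framing that (together with the surgeries on the remaining components) produces $M$. First I would observe that, being null-homotopic, $K$ is homotopic to a trivially embedded unknot $U$ contained in a ball $B\subset N$. Since any two homotopic knots in a $3$--manifold differ by a finite sequence of crossing changes and ambient isotopies (a generic homotopy between embeddings is an immersion of an annulus with finitely many isolated transverse double points, each of which records a crossing change), I can fix crossing changes $X_1,\dots,X_m$ converting $K$ into $U$.

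Next I would trade each crossing change for a surgery circle. At a crossing there is a $(\pm 1)$--framed unknot $c$ encircling the two strands, bounding a disc that meets the knot in exactly two points, and such that blowing $c$ down realizes the crossing change while returning the manifold $N$; this is the usual Kirby-calculus description of a crossing change. Reading this in reverse, from a surgery presentation of $M$ by a framed knot $K_{j-1}$ I may pass to the Kirby-equivalent presentation by $K_j\cup c_j$, where $K_j$ is the result of applying $X_j$ to $K_{j-1}$ and $c_j$ is a $(\pm 1)$--framed unknot, without changing the surgered $3$--manifold. Starting from $K_0=K$ with its given framing and applying this $m$ times, the knot becomes the unknot $U=K_m$ while the unknots $c_1,\dots,c_m$ accumulate, the surgered manifold remaining $M$ at every stage. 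Hence $M$ is obtained from $N$ by surgery about $L'=U\cup c_1\cup\dots\cup c_m$, every component of which is an unknot. Performing this for each component of $L$ in turn (the circles produced while unknotting one component being allowed to link the others, which is harmless since we only require them to be unknots) yields the desired link.

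The main obstacle I anticipate is justifying the crossing-change-by-surgery move cleanly and keeping the bookkeeping consistent: I must check that the circle realizing a crossing change is genuinely a $(\pm 1)$--framed unknot, that introducing it is an honest inverse of a blow-down and so preserves the boundary $3$--manifold, and that the circles $c_j$ introduced at the successive (and disjoint) crossing sites remain unknotted and do not interfere with one another as later circles are added. The framings, by contrast, require no separate argument: a blow-down alters the framing of a component passing through it by an amount depending only on the relevant linking numbers, so proceeding \emph{forward} from $K$ with its correct framing automatically produces a valid surgery presentation of $M$. Since the whole construction consists of moves that preserve the surgered manifold, there is no framing left to solve for at the end.
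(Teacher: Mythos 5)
Your proposal is correct and follows essentially the same route as the paper: unknot each null-homotopic component by a finite sequence of crossing changes, and realize each crossing change by introducing a $\pm 1$-framed unknot (the paper phrases this as sliding the two strands over an unlinked $\pm 1$-framed unknot via Kirby moves, you phrase it as blowing down a circle encircling the two strands with algebraic intersection zero -- the same standard move). Your added justifications (the immersed-annulus argument for why a null-homotopy yields finitely many crossing changes, and the linking-number-zero observation for why framings are preserved) fill in details the paper leaves implicit, but do not change the argument.
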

\begin{proof}
As each component of $L$ is homotopically trivial, there is a sequence
of crossing changes so that on performing these crossing changes we
obtain a link all of whose components are unknots. Observe that each
crossing change of a knot $\kappa$ is locally of a standard
form. Namely, there is a ball $B\subset M$ which intersects $\kappa$
in a pair of arcs $c_1$ and $c_2$, and the crossing change corresponds
to a crossing of these arcs to give new arcs $c'_1$ and $c'_2$ with
the same endpoints as $c_1$ and $c_2$.

Further, if $K_i$ is an unknot in $B$ unlinked from the arcs $c_i$
with framing $\pm 1$, then on performing the Kirby moves of sliding
$c_1$ and $c_2$ over $K_i$, with opposite orientations, we get the
knot obtained by crossing $c_1$ and $c_2$. In this manner we can
obtain both positive and negative crossing changes. 

Replacing $L$ by its union with unknots and performing the Kirby moves
as above does not change the resulting manifold. Thus, we can replace
$L$ by a framed link in $N$, each of whose components is an unknot, so
that the result of surgery about the link is $M$.
\end{proof}

This completes the proof of Theorem~\ref{surger}.
\end{proof}

\section{Surgery and $4$-manifolds}

We now characterise dominance in terms of $4$-manifolds.

\begin{proof}[Proof of Theorem~\ref{embed}]

Suppose $M$ embeds in $W$ as in the hypothesis. Then $W-M$ has two
components with closures $K_1$ and $K_2$ so that
$\del[K_1]=[M]-[N\times \{0\}]$. Hence $[M]$ is homologous to
$[N\times \{0\}]$. Now by identifying all the points in each $\C P^2$
and $\overline{\C P^2}$ in $W=(M\times I)\#_n \overline{\C P^2}\#_m
{\C P^2}$ to a single point, we get a blow-down map $\pi\co W\to N\times
[0,1]$. By composing with the projection, we get a map $p\co W\to N$ with
$p\co N\times \{0\}\to N$ being the identity map. This restricts to a map
$p\co M\to N$. As $[M]$ is homologous to $[N\times \{0\}]$,
$p_*([M])=[N]$, i.e., $M$ has degree one.

Conversely, assume $M$ and $N$ are as in the hypothesis. By
Theorem~\ref{surger}, $M$ can be obtained from $N$ by surgery about a
framed link $L$, all of whose components are unknots in $N$. Hence $L$
can be obtained from an unlink $L_0\subset N$ by a sequence of (say
$p$) crossings.

Let $K_1$,\dots $K_n$ a collection of unknots in $N$, with $n\geq p$
to be specified later, so that $L_0\cup\{ K_1,\dots K_n\}$ forms an
unlink. Let $W$ be obtained by attaching a $2$-handle with framing
$\pm 1$ (with signs to be chosen later) to $N\times [0,1/2]$ along
each of $K_0$, $K_1$,\dots $K_n$. Note that $W=(N\times
[0,1])\#_{k}\overline{\C P^2}\#_l {\C P^2}$ for some $k$ and $l$.

We shall construct a different Kirby diagram for $W$. Corresponding to
the $p$ crossings of $L_0$ required to make it isotopic to $L$ we can
find disjoint balls $B_i$, $1\leq i\leq p$, in which the crossing is
made.  By an isotopy, we can assume that for $1\leq i\leq p$, $K_i$ is
contained in $B_i$. Performing the Kirby moves corresponding to the
crossing changes in each of these $B_i$, we get a Kirby diagram for
$W$ with a sublink isotopic to $L$. Furthermore, by performing the
Kirby move of sliding over the unknots $K_{p+1}$,\dots, $K_n$ (with
framing $\pm 1$) we can ensure this sublink is isotopic to $L$ as a
framed link (as such a Kirby move changes the framing by $\pm 1$
without changing the link). Consider the corresponding Morse function
for $W$ with the $2$-handles corresponding to components of $L$
attached first.  The level set on attaching $L$ is the result of
surgery about $L$. But this is $M$, and hence we get an embedding of
$M$ separating the boundary components of $W$.
\end{proof}

We next see that strong dominance is not the same as dominance. As is
well known, any $3$-manifold dominates the $3$-sphere.  However, we
see that $S^3$ is not a minimal element with respect to strong
dominance or even negative dominance. This result has also been
observed by Ding~\cite{Di}.

\begin{proposition}\label{Poinc}
For $n\geq 0$, there is no embedding of the Poincare homology sphere
in $(S^3\times I)\#_n \overline{\C P^2}$ which separates the boundary
components.
\end{proposition}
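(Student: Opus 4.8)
The plan is to obstruct the embedding using the Rokhlin invariant, which is the standard tool for detecting that the Poincar\'e homology sphere $\Sigma$ is not nullcobordant by a homology cobordism built only from negative blow-ups. First I would observe that $W=(S^3\times I)\#_n\overline{\C P^2}$ is a simply-connected $4$-manifold whose intersection form is negative definite, being $n\langle -1\rangle$ (diagonalizable), and whose two boundary components are both copies of $S^3$. A separating embedding of $\Sigma$ into $int(W)$ splits $W$ into two pieces $K_1$ and $K_2$ with $\del K_1=S^3\sqcup\Sigma$ (up to orientation) and $\del K_2=\Sigma\sqcup S^3$; capping off the $S^3$ boundary components with $4$-balls produces a smooth compact spin-or-not cobordism from $\Sigma$ to $\Sigma$ sitting inside a closed negative-definite manifold.

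The key step is to extract from such a splitting a negative-definite $4$-manifold $X$ with $\del X=\Sigma$ and then invoke the obstruction coming from Donaldson's theorem (or equivalently the Frøyshov/Rokhlin-type invariant). More precisely, I would glue $K_1$ (with its $S^3$ boundary capped by a $4$-ball) to the standard negative-definite filling of $\Sigma$ — namely the $E_8$-plumbing, which has even negative-definite intersection form of rank $8$ — along $\Sigma$ with the appropriate orientation, producing a closed, smooth, simply-connected $4$-manifold whose intersection form would then have to be negative definite of the form $E_8\oplus n\langle -1\rangle$. This is where the obstruction bites: by Donaldson's diagonalization theorem, a closed smooth negative-definite $4$-manifold must have diagonalizable (standard) intersection form, but $E_8\oplus n\langle -1\rangle$ contains the unimodular even summand $E_8$ and is not diagonalizable over $\Z$, a contradiction. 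One must check the orientations line up so that both pieces contribute with the same (negative) sign to the resulting form, which is exactly what the separating, appropriately-oriented hypothesis guarantees.

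The alternative, more elementary route I would keep in reserve uses only the Rokhlin invariant $\mu(\Sigma)=1\in\Z/2\Z$ together with the fact that the $E_8$-manifold has signature $-8\equiv 0\pmod{16}$; since each $\overline{\C P^2}$ summand is not spin, one has to be careful about whether the relevant cobordism is spin, so the Donaldson-theoretic argument is cleaner and I would make it primary. In either approach the homological input is that $H_1(\Sigma)=0$ forces the capping and gluing to preserve definiteness and unimodularity, so no lens-space-type correction terms intervene.

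The main obstacle I anticipate is \emph{orientation bookkeeping}: the definition of separating embedding with the appropriate orientation must be used to ensure that the two complementary regions $K_1,K_2$ inherit orientations making $\Sigma$ bound with the sign needed so that $E_8$ and the blow-up summands add rather than cancel in signature. Getting this sign right — equivalently, verifying that the glued closed manifold is genuinely negative (not indefinite) definite — is the crux; once the intersection form is pinned down as $E_8\oplus n\langle -1\rangle$, Donaldson's theorem finishes the argument immediately. I would therefore spend most of the write-up carefully tracking orientations through the blow-down map $\pi\co W\to S^3\times[0,1]$ already used in the proof of Theorem~\ref{embed}.
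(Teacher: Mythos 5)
Your argument is essentially the paper's own proof: cap off the two $S^3$ boundary components to get a separating embedding of $\Sigma$ in $\#_n\overline{\C P^2}$, split along it, glue the appropriate definite piece to the $E_8$-manifold bounding $\Sigma$, and contradict Donaldson's diagonalization theorem via the non-diagonalizable summand $E_8$. The only minor imprecisions --- the complementary piece need carry only some $k\leq n$ of the $\langle -1\rangle$ summands, and one should verify $H_1=0$ of the glued manifold (via Mayer--Vietoris, as the paper does) rather than assert simple connectivity --- do not affect the argument.
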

\begin{proof}
Note that the Poincar\'e homology sphere can be obtained from $S^3$ by
surgery on the $E_8$ link, and hence, with one of its orientations,
bounds a $4$-manifold $W$ with positive definite intersection
pairing. Denote the Poincar\'e homology sphere with this orientation
as $M$.

Suppose, for some $n\geq 0$, there is an embedding of the Poincare
homology sphere in $(S^3\times I)\#_n \overline{\C P^2}$ which
separates the boundary components. Then by capping off the boundary
components $S^3\times \{0\}$ and $S^3\times\{1\}$, we get an embedding
of the Poincare homology sphere in $\#_n \overline{\C P^2}$. Splitting
along the embedding and using the Mayer-Vietoris sequence, we get
$4$-manifolds $W_1$ and $W_2$ bounding $M$ and $-M$ with positive
definite intersection forms. If $\del W_1=-M$, then $Y=W\coprod_M W_1$
is a smooth $4$-manifold with $H_1(Y,\Z)=0$ and the intersection form
on $H_2(Y,\Z)$ is positive definite but not diagonalisable, contradicting
Donaldson's theorem~\cite{Do}.

\end{proof}

 \bibliographystyle{amsplain}

\begin{thebibliography}{10}

\bibitem{BW} M. Boileau and S. Wang, 
`Non-zero degree maps and surface bundles over $S\sp 1$',
 J. Differential Geom.  43  (1996), 789--806. 

\bibitem{Di} F. Ding, 
`Smooth structures on some open $4$-manifolds',
Topology 36 (1997), no. 1, 203--207.

\bibitem{Do} S. K. Donaldson,  
`An application of gauge theory to four-dimensional topology',
J. Differential Geom.  18  (1983),  no. 2, 279--315.


\bibitem{Fr}  M. Freedman,
`The topology of four-dimensional manifolds',
J. Differential Geom.  {17}  (1982), 357--453. 

\bibitem{Ha} W. Haken, 
`On homotopy $3$-spheres',
Illinois J. Math. 10 (1966) 159--178.

\bibitem{HWZ} C. Hayat-Legrand, S. Wang and H. Zieschang, 
`Any 3-manifold 1-dominates at most finitely many 3-manifolds of $S\sp
3$-geometry',
Proc. Amer. Math. Soc.  {130} (2002), 3117--3123.

\bibitem{He} J. Hempel, `$3$-Manifolds', Ann. of Math. Studies,
No. 86, Princeton University Press, Princeton, 1976.

\bibitem{Ka} A. Kawauchi,
`An imitation theory of manifolds',
Osaka J. Math.  26  (1989),  no. 3, 447--464.

\bibitem{Li} W. B. R. Lickorish, 
`A representation of orientable combinatorial $3$-manifolds',
Ann. of Math. (2) {76}  (1962) 531--540. 

\bibitem{OZ} P. Ozsvath, Peter and  Z. Szabo, 
`Holomorphic discs and topological invariants for closed three-manifolds',
to appear in the Annals of Mathematics.

\bibitem{OZ1} P. Ozsvath and Z. Szabo, 
`Holomorphic triangles and invariants of smooth four-manifolds', 
preprint

\bibitem{ReW} A. W. Reid, and S. Wang `Non-Haken $3$-manifolds are not
large with respect to mappings of non-zero degree',
Comm. Anal. Geom. {7} (1999), 105--132.

\bibitem{Ro}  Y. W. Rong, 
`Degree one maps between geometric $3$-manifolds',
Trans. Amer. Math. Soc. 332  (1992), 411--436.

\bibitem{RW} Y. W. Rong and S. Wang, 
`The preimages of submanifolds',
Math. Proc. Cambridge Philos. Soc.  {112} (1992), 271--279.

\bibitem{So} T. Soma, 
`Non-zero degree maps to hyperbolic $3$-manifolds',  
J. Differential Geom.  {49} (1998), 517--546.

\bibitem{Wa} F. Waldhausen, 
`On mappings of handlebodies and of Heegaard splittings',
 Topology of Manifolds (Proc. Inst.,
Univ. of Georgia, Athens, Ga., 1969) 205--211

\bibitem{Wal} A. D. Wallace,
`Modifications of cobounding manifolds',
Canad. J. Math. {12}, 503--528 (1960).

\bibitem{WZ} Wang, Shicheng; Zhou, Qing 
`Any 3-manifold 1-dominates at most finitely many geometric 
$3$-manifolds.'
Math. Ann. {322} (2002), 525--535.

\end{thebibliography}

\end{document}